\documentclass{amsart}

\usepackage{amssymb}
\usepackage{amsfonts}
\usepackage{amsmath}
\usepackage{amsthm}
\usepackage{graphicx}
\usepackage{mathrsfs}
\usepackage{dsfont}
\usepackage{amscd}
\usepackage{multirow}
\usepackage{palatino}
\usepackage{mathpazo}
\usepackage{mathtools}
\usepackage[all]{xy}
\linespread{1.05}        
\usepackage[scaled]{helvet} 
\usepackage{courier} 
\normalfont
\usepackage[T1]{fontenc}
\usepackage{calligra}
\usepackage{verbatim}
\usepackage[usenames,dvipsnames]{color}
\usepackage[colorlinks=true,linkcolor=Blue,citecolor=Violet]{hyperref}
\usepackage{cite}
\usepackage{enumitem}

\makeatletter
\@namedef{subjclassname@2010}{%
  \textup{2010} Mathematics Subject Classification}
\makeatother

\theoremstyle{plain}
\newtheorem{theorem}{Theorem}[section]

\newtheorem{corollary}[theorem]{Corollary}

\newtheorem{lemma}[theorem]{Lemma}
\newtheorem{proposition}[theorem]{Proposition}

\newtheorem{theorem-definition}[theorem]{Theorem-Definition}

\theoremstyle{definition}
\newtheorem{definition}[theorem]{Definition}

\newtheorem{convention}[theorem]{Convention}

\theoremstyle{remark}



\numberwithin{equation}{section}
 
\frenchspacing

\newcommand{\A}{{\mathcal{A}}}

\newcommand{\un}{{\mathds{1}}}

\newcommand{\N}{{\mathds{N}}}

\newcommand{\R}{{\mathds{R}}}
\newcommand{\C}{{\mathds{C}}}



\renewcommand{\geq}{\geqslant}
\renewcommand{\leq}{\leqslant}


\hyphenation{Gro-mov}
\hyphenation{Haus-dorff}

\allowdisplaybreaks[1]

\makeatletter
\newcommand{\vast}{\bBigg@{4}}
\newcommand{\Vast}{\bBigg@{5}}
\makeatother

\begin{document}

\title[The Bures metric and the quantum metric]{On the Bures metric, C*-norm, and the quantum metric}
\author{Konrad Aguilar}
\address{Department of Mathematics and Statistics, Pomona College, 610 N. College Ave., Claremont, CA 91711} 
\email{konrad.aguilar@pomona.edu}
\urladdr{\url{https://aguilar.sites.pomona.edu}}
\thanks{The first author is supported by NSF grant DMS-2316892}

\author{Karina Behera}
\address{Department of Mathematics and Statistics, Pomona College, 610 N. College Ave., Claremont, CA 91711} 
\email{kaba2021@mymail.pomona.edu}

\author{Tron Omland}
\address{Norwegian National Security Authority (NSM) \and Department of Mathematics, University of Oslo, Norway}
\email{tron.omland@gmail.com}

\author{Nicole Wu}
\address{Department of Mathematics, Harvey Mudd College, 320 E. Foothill Blvd., Claremont, CA 91711} 
\email{nwu@hmc.edu}

\maketitle

\begin{abstract}
    We prove that the topology on the density space with respect to a unital C*-algebra and a faithful induced by the C*-norm is finer than the Bures metric topology. We also provide an example when this containment is strict. Next, we provide a metric on the density space induced by a quantum metric in the sense of Rieffel and prove that the induced topology is the same as the topology induced by the Bures metric and C*-norm when the C*-algebra is assumed to be finite dimensional. Finally, we provide an example of when the Bures metric and induced quantum metric are not metric equivalent. Thus, we provide a bridge between these aspects of quantum information theory and noncommutative metric geometry.
\end{abstract}

\tableofcontents

\baselineskip=17pt

\section{Introduction and Background} 

The Bures metric, which was introduced by Bures in \cite{Bures}, is a vital tool in quantum information theory (see \cite{Hayashi} for some applications). Recently, the Bures metric has been adapted to world of von Neumann algebras and C*-algebras by Farenick and Rahaman \cite{Farenick-Rahaman17}. The quantum metric, which was introduced by Rieffel in  \cite{Rieffel98a}, allows one to prove powerful results about convergence of quantum spaces including those arising from high energy physics \cite{Rieffel00, Rieffel01} and many more \cite{Latremoliere-Packer17, Latremoliere20, Kaad20, Aguilar24, Latremoliere24b} to only name a few.  This paper brings these two important metrics together by using a natural way to compare these two metric along with some comparisons between their topological and geometric properties in the hopes to introduce these methods of measurement to the other field.

In Section \ref{s:c*-bures}, we provide comparisons of the topological structure of the Bures metric and a third metric, the one induced by the C*-norm on the density space. The purpose of bringing this third metric into the picture is not just to provide another interesting comparison, but to also provide a method to compare the Bures metric and the quantum metric in the next section. In Section \ref{s:top-c*-bures-quantum}, we show how one can place a metric on the density space using quantum metrics, and then show that this induced quantum metric is topologically equivalent to the Bures metric. Our approach uses the results in  Section \ref{s:c*-bures}. Finally, in Section \ref{s:non-metric}, we provide a case when the Bures metric and induced quantum metric are not metric equivalent. For the remainder of this section, we provide some necessary background for the rest of the paper.

\begin{convention}
Given a unital C*-algebra $\A$, we denote its unit by $1_\A$, its norm by $\|\cdot\|_\A$, its self-adjoint elements by $sa(\A)$ and positive elements by $\A_+$, and  its state space by $S(\A)$. We also denote the metric induced by $\|\cdot\|_\A$ by $d_\A.$

However,    given a compact Hausdorff space $X$, we denote the C*-norm on $C(X)$ by $\|\cdot\|_X$ and the unit (the constant $1$ function), by $\un$.
\end{convention}

\begin{definition}[\! \cite{Rieffel98a}]\label{d:quantum-metric}
    Let $\A$ be a unital $C^*$-algebra. If  $L:\A \rightarrow [0,\infty]$ is a seminorm on $\A$ such that:
    \begin{enumerate}
        \item $dom(L)=\{a \in \A : L(a)< \infty\}$ is dense in $\A$,
        \item $L(a)=0$ if and only if $a \in \C1_\A$,
        \item $L(a)=L(a^*)$ for every $a \in \A$,
        \item the Monge-Kantorovich metric defined for any two states $\varphi, \psi \in S(\A)$ by \[mk_L(\varphi, \psi)=\sup \{ |\varphi(a)- \psi(a)| : a \in \A, L(a) \leq 1\}
        \]
        metrizes the weak* topology on $S(\A),$
    \end{enumerate}
then $(\A, L)$ is a {\em compact quantum metric space}, and we call $L$ an $L$-seminorm.
\end{definition}

\begin{theorem-definition}[\! {\cite[Theorem 2.6]{Farenick-Rahaman17}}]\label{d:bures-metric}
    Let $\A$ be a unital $C^*$-algebra. Let $\tau$ be a faithful trace (not necessarily of norm $1$). Define the  {\em density space with respect to $\tau$} to be
    \[
    D_\tau(\A)=\{ a \in \A_+ : \tau(a)=1\}.
    \]
    Define the {\em Bures metric with respect to $\tau$} for every $x,y \in D_\tau(\A)$ by 
    \[
    d_B^\tau (x,y)=\sqrt{1-\tau\left(|\sqrt{x}\sqrt{y}|\right)}.
    \]
\end{theorem-definition}
We call the above a "Theorem-Definition" since the proof that the Bures metric is indeed a metric in this general setting of unital C*-algebras equipped with faithful trace is a non-trivial result of \cite{Farenick-Rahaman17}.

We also formally state what we mean by topological equivalence and metric equivalence so that there is no confusion.
\begin{definition}
    Let $X$ be a non-empty set and let $d$ and $d'$ be two metrics on $X$.
    \begin{enumerate}
        \item We say that $d$ and $d'$ are {\em topologically equivalent} if they induce the same topologies.
        \item We say that $d$ and $d'$ are {\em metric equivalent} if there exist $\alpha,\beta>0$ such that 
        \[
        \alpha d(x,y) \leq d'(x,y) \leq \beta d(x,y)
        \]
        for every $x,y \in X$, or equivalently
        \[
        \alpha \leq \frac{d'(x,y)}{d(x,y)} \leq\beta
        \]
        for every $x,y \in X$ such that $x \neq y.$
    \end{enumerate}
\end{definition}

\section*{Acknowledgements}
This work is partially supported by the first author's  NSF grant DMS-2316892. The third author would like to thank Erik B\'edos for helpful discussions.

\section{Comparison of C*-metric  and Bures metric topologies}\label{s:c*-bures}
In this section, we show that the C*-metric topology is finer than the topology induced by the Bures metric. We also show that this containment can be strict by providing an explicit example in the C*-algebra of complex-valued continuous functions on $[0,1]$ and the trace given by Lebesgue integration. First, we prove two lemmas that are likely well-known, but we provide their proofs for convenience.

\begin{lemma}\label{l:functional-calculus}
    Let $\A$ be a unital $C^*$-algebra. 
Let $(x_n)_{n \in \N}$ be a sequence in $\A_+$ that converges in the $C^*$-norm to   $x \in \A_+$.
Let $r \geq 0$ be such that for any $n \in \N$, $\| x_n \|_\A, \| x \|_\A \leq r$.
If $f:[0,r] \rightarrow \mathbb{R}$ is   continuous, then $(f(x_n))_{n \in \N}$ converges to $f(x)$ in the $C^*$-norm.
\end{lemma}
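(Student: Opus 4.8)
The plan is to reduce the continuous function $f$ to the polynomial case via the Weierstrass approximation theorem, and then to handle polynomials directly using the uniform spectral bound $r$ together with the continuity of multiplication in the C*-norm. The key structural observation is that, since $x_n, x \in \A_+$ have norm at most $r$, their spectra are contained in $[0,r]$. Consequently, for any continuous $g : [0,r] \to \R$, the continuous functional calculus on the self-adjoint elements $x_n$ and $x$ gives
\[
\|g(x_n)\|_\A = \sup_{t \in \sigma(x_n)} |g(t)| \leq \sup_{t \in [0,r]} |g(t)|,
\]
and likewise with $x$ in place of $x_n$. In particular, the functional calculus is contractive from the uniform norm on $C([0,r])$ into $\A$, uniformly in $n$, which is precisely what transfers a polynomial approximation of $f$ into a norm estimate inside $\A$.

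First I would fix $\varepsilon > 0$ and apply the Weierstrass approximation theorem to choose a real polynomial $p$ with $\sup_{t \in [0,r]} |f(t) - p(t)| < \varepsilon$. Since $\sigma(x_n), \sigma(x) \subseteq [0,r]$, the contractivity noted above yields $\|f(x_n) - p(x_n)\|_\A < \varepsilon$ for every $n \in \N$ and $\|f(x) - p(x)\|_\A < \varepsilon$; crucially, the first bound is uniform in $n$.

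Next I would establish the polynomial case $p(x_n) \to p(x)$ in the C*-norm. By linearity it suffices to treat each monomial $t \mapsto t^k$, for which I would use the telescoping identity
\[
x_n^k - x^k = \sum_{j=0}^{k-1} x_n^{\,j}\,(x_n - x)\,x^{\,k-1-j}.
\]
Combined with $\|x_n\|_\A, \|x\|_\A \leq r$, this gives $\|x_n^k - x^k\|_\A \leq k\, r^{\,k-1}\, \|x_n - x\|_\A$. Summing over the monomials of $p$ against their coefficients produces a constant $C$ depending only on $p$ and $r$ with $\|p(x_n) - p(x)\|_\A \leq C\,\|x_n - x\|_\A$; since $x_n \to x$ in norm, the right-hand side tends to $0$.

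Finally I would combine these estimates through the triangle inequality
\[
\|f(x_n) - f(x)\|_\A \leq \|f(x_n) - p(x_n)\|_\A + \|p(x_n) - p(x)\|_\A + \|p(x) - f(x)\|_\A,
\]
where the first and third terms are each below $\varepsilon$ and the middle term is below $\varepsilon$ for all sufficiently large $n$. Hence $\limsup_{n} \|f(x_n) - f(x)\|_\A \leq 3\varepsilon$, and as $\varepsilon$ was arbitrary the lemma follows. This argument is essentially routine; the only points requiring attention are ensuring the functional-calculus bound is \emph{uniform} in $n$ (which rests entirely on the common spectral enclosure $[0,r]$) and working with real polynomials throughout, so that all elements involved remain self-adjoint and no separate verification of $f(x_n)^\ast = f(x_n)$ is needed.
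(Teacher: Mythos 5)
Your proof is correct and follows essentially the same route as the paper's: Weierstrass approximation of $f$ by a polynomial, the contractivity of the continuous functional calculus arising from the common spectral enclosure $\sigma(x_n),\sigma(x)\subseteq[0,r]$, and the triangle inequality. The only difference is that you justify the polynomial step explicitly with the telescoping identity and a quantitative bound $\|p(x_n)-p(x)\|_\A\leq C\|x_n-x\|_\A$, whereas the paper simply invokes norm-continuity of polynomials without proof, so your write-up is if anything slightly more complete.
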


\begin{proof}
By Weierstrass approximation theorem, there exists polynomials $(p_n)_{n \in \N}$ that converges uniformly to $f$ on $[0, r]$.

Let $\varepsilon > 0$. By uniform convergence of $(p_n)_{n \in \N}$ to $f$, there exists $N \in \N$ such that, for any $n \geq N$, $\|p_n - f\|_{[0,r]} < \frac{\varepsilon}{3}$, and in particular $\|p_N - f\|_{[0,r]} < \frac{\varepsilon}{3}$.

Note that as $p_N$ is a polynomial, there exits $N^{\prime} \geq N$ such that, for any $n \geq N^{\prime}$, \[\|p_N (x_n) - P_N (x)\|_\A < \frac{\varepsilon}{3}.\]

Let $n \geq N'$. Then by functional calculus,
\[
\|f(x_n) - p_N(x_n)\|_\A =\|f - p_N\|_{\sigma(x_n)}\leq \|f-p_N\|_{[0,r]}<\frac{\varepsilon}{3}
\]
since $\sigma(x_n) \subseteq [0,r]$. 
Similarly,
\[
\|f(x) - p_N(x)\|_\A<\frac{\varepsilon}{3}
\]
since $\sigma(x)\subseteq [0,r]$.

By triangle inequality of the norm,
\begin{equation*} \label{eq1}
\begin{split}
\|f(x_n) - f(x)\|_\A & \leq \|f(x_n) - p_N(x_n)\|_\A + \|p_N(x_n) - p(x)\|_\A \\
& \quad \quad + \|p_N(x) - f(x)\|_\A \\
 & < \frac{\varepsilon}{3} + \frac{\varepsilon}{3} + \frac{\varepsilon}{3}   = \varepsilon.
\end{split}
\end{equation*}

Thus, for any arbitrary $\varepsilon > 0$, we can find $N^{\prime}$ such that for any $n \geq N^{\prime}$, $\|f(x_n) - f(x)\|_\A \leq \varepsilon$. Thus, $(f(x_n))_{n \in \N}$ converges to $f(x)$ in the $C^*$-norm.
\end{proof}

\begin{lemma}\label{l:continuous-absolute}
    Let $\A$ be a unital $C^*$-algebra. 
If $(x_n)_{n \in \N}$ be a sequence in $\A$ that converges in the $C^*$-norm to a positive element $x \in \A$, then $(|x_n|)_{n \in \N}$ $C^*$-norm converges to $|x|$.
\end{lemma}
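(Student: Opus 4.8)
The plan is to reduce the statement to the continuity of continuous functional calculus already established in Lemma \ref{l:functional-calculus}. The key observation is that the absolute value is defined through the positive element $x_n^\ast x_n$ via $|x_n| = \sqrt{x_n^\ast x_n}$, and that the square-root function is continuous on any compact interval $[0,r]$. So the entire argument hinges on transporting the convergence $x_n \to x$ to convergence of the products $x_n^\ast x_n$, and then feeding this into the previous lemma.

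First I would record that since $(x_n)_{n\in\N}$ converges in norm, the sequence $(\|x_n\|_\A)_{n\in\N}$ is bounded, say by $M \geq \|x\|_\A$. Then I would show $x_n^\ast x_n \to x^\ast x$ in $C^\ast$-norm by the standard estimate
\[
\|x_n^\ast x_n - x^\ast x\|_\A \leq \|x_n\|_\A\,\|x_n - x\|_\A + \|x_n - x\|_\A\,\|x\|_\A,
\]
using continuity of multiplication and of the adjoint; the right-hand side tends to $0$ because $\|x_n\|_\A \leq M$ and $\|x_n - x\|_\A \to 0$. Each $x_n^\ast x_n$ is positive, and they are uniformly bounded since $\|x_n^\ast x_n\|_\A = \|x_n\|_\A^2 \leq M^2$, and likewise $\|x^\ast x\|_\A \leq M^2$; so I set $r = M^2$.

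Next I would apply Lemma \ref{l:functional-calculus} to the sequence $(x_n^\ast x_n)_{n\in\N}$ of positive elements, which converges to the positive element $x^\ast x$, with the continuous function $f(t) = \sqrt{t}$ on $[0,r]$. This yields $\sqrt{x_n^\ast x_n} \to \sqrt{x^\ast x}$ in $C^\ast$-norm, that is, $|x_n| \to |x|$. Finally, since $x$ is assumed positive, $x^\ast = x$ and hence $x^\ast x = x^2$, so $|x| = \sqrt{x^2} = x$; this identifies the limit concretely, though it is not strictly needed for the convergence statement itself.

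There is no genuine obstacle here once the reduction is set up correctly: the only point requiring a little care is the uniform boundedness hypothesis of Lemma \ref{l:functional-calculus}, which must be verified for the \emph{products} $x_n^\ast x_n$ rather than for the $x_n$ themselves, and this is exactly where the bound $\|x_n^\ast x_n\|_\A = \|x_n\|_\A^2 \leq M^2$ is used. I note that positivity of the limit $x$ is what makes $x^\ast x$ positive and keeps us within the hypotheses of the earlier lemma; a more general statement for arbitrary norm-convergent sequences would follow by the same argument, since $x_n^\ast x_n$ is positive regardless.
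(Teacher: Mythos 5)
Your proof is correct and takes essentially the same route as the paper's: write $|x_n| = \sqrt{x_n^\ast x_n}$, show $x_n^\ast x_n \to x^\ast x$ using continuity of multiplication and the adjoint, and then apply Lemma \ref{l:functional-calculus} with the square-root function. Your explicit verification of the uniform bound $r = M^2$ for the products fills in a hypothesis that the paper's proof leaves implicit, but this is added care rather than a different argument.
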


\begin{proof}
Note that $|x| = \sqrt{x^* x}$ and $|x_n| = \sqrt{x_n^* x_n}$ for every $n \in \N$.
For any $n \in \N$, let $a_n = x_n^* x_n$ and $a = x^* x$. 

Then, since multiplication and the adjoint are continuous in the $C^*$-norm, we have that $(a_n)_{n \in \N}$ converges to $a$ in the $C^*$-norm.

For any $n \in \N$, $|x| = \sqrt{a}$ and $|x_n| = \sqrt{a_n}$. Therefore, by Lemma \ref{l:functional-calculus} and   continuity of the square root, we know that $(|x_n|)_{n \in \N}$ $C^*$-norm converges to $|x|$.
\end{proof}

We now prove our main theorem that allows us to compare the C*-metric topology and the Bures metric topology.

\begin{theorem}\label{t:conv-bures-c*}
Let $\A$ be a unital $C^*$-algebra and let $\tau$ be a faithful tracial state.
Let $(x_n)_{n \in \N}$ be a sequence in $D_{\tau}(\A)$ and let $x \in D_{\tau}(\A)$.
If $(x_n)_{n \in \N}$ $C^*$-norm converges to $x$, then $(x_n)_{n \in \N}$ converges to $x$ with respect to the Bures metric $d_{B}^{\tau}$.
\end{theorem}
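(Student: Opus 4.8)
The plan is to reduce convergence in the Bures metric to a statement about the trace of an absolute value, and then chain together the two lemmas just proved with the elementary continuity of multiplication and of the trace. Concretely, since
\[
d_B^\tau(x_n,x)=\sqrt{1-\tau\left(\left|\sqrt{x_n}\sqrt{x}\right|\right)},
\]
and since $\tau(x)=1$ forces $\tau(|x|)=\tau(x)=1$ (because $x\in\A_+$ gives $|x|=x$), it suffices to prove the single limit $\tau\!\left(\left|\sqrt{x_n}\sqrt{x}\right|\right)\to 1=\tau(x)$ and then conclude by continuity of $t\mapsto\sqrt{1-t}$ at $t=1$.

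First I would pass from $x_n\to x$ to $\sqrt{x_n}\to\sqrt{x}$ in the $C^*$-norm. A norm-convergent sequence is bounded, so I can choose $r\geq 0$ with $\|x_n\|_\A,\|x\|_\A\leq r$ for all $n$; applying Lemma \ref{l:functional-calculus} to the continuous function $f(t)=\sqrt{t}$ on $[0,r]$ then yields $\sqrt{x_n}\to\sqrt{x}$. Next, because multiplication is norm-continuous, $\sqrt{x_n}\sqrt{x}\to\sqrt{x}\,\sqrt{x}=x$, and crucially the limit $x$ is a positive element. This positivity is exactly the hypothesis needed to invoke Lemma \ref{l:continuous-absolute} with the sequence $\left(\sqrt{x_n}\sqrt{x}\right)_{n\in\N}$, giving $\left|\sqrt{x_n}\sqrt{x}\right|\to|x|=x$ in the $C^*$-norm.

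Finally, since $\tau$ is a tracial state it is a bounded linear functional (of norm $1$), hence norm-continuous, so $\tau\!\left(\left|\sqrt{x_n}\sqrt{x}\right|\right)\to\tau(x)=1$. Combined with continuity of the square root, this gives $d_B^\tau(x_n,x)\to 0$, as desired.

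I do not expect a serious obstacle here, since the statement is essentially a composition of continuous operations; the two places that require a moment's care are securing the uniform bound $r$ so that Lemma \ref{l:functional-calculus} applies to the square root, and verifying that the relevant limit $\sqrt{x}\,\sqrt{x}=x$ is positive so that the positivity hypothesis of Lemma \ref{l:continuous-absolute} is genuinely met. The identity $|x|=x$ for $x\in\A_+$ is what ties the target value to $\tau(x)=1$ and makes the Bures distance collapse to $0$ in the limit.
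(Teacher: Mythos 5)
Your proposal is correct and follows essentially the same route as the paper's own proof: Lemma \ref{l:functional-calculus} applied to $t\mapsto\sqrt{t}$, norm-continuity of multiplication, Lemma \ref{l:continuous-absolute} applied to the sequence $\left(\sqrt{x_n}\sqrt{x}\right)_{n\in\N}$ with positive limit $x$, and finally norm-continuity of $\tau$. If anything, you are slightly more careful than the paper in making explicit the uniform bound $r$ required to invoke Lemma \ref{l:functional-calculus} and the positivity of the limit required by Lemma \ref{l:continuous-absolute}.
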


\begin{proof}
If $(x_n)_{n \in \N}$ converges to $x$ in the $C^*$-norm, 
then, since $g(x) = \sqrt{x}$ is   continuous, 
   $(\sqrt{x_n})_{n \in \N}$ converges to $\sqrt{x}$ in the $C^*$-norm by Lemma \ref{l:functional-calculus}.

Since $x$ is a positive and fixed,  $(\sqrt{x_n}\cdot \sqrt{x})_{n \in \N}$ converges to $\sqrt{x} \cdot \sqrt{x}$ in the $C^*$-norm. Note that $\sqrt{x} \cdot \sqrt{x} = x$   by definition, so $(\sqrt{x_n}\cdot \sqrt{x})_{n \in \N}$  $C^*$-norm converges to $x$. By Lemma \ref{l:continuous-absolute}, this implies $(|\sqrt{x_n}\cdot \sqrt{x}|)_{n \in \N}$ $C^*$-norm converges to $|x|=x$. 

Since $\tau$ is $C^*$-norm continuous,   $(|\sqrt{x_n}\cdot \sqrt{x}|)_{n \in \N}$ $C^*$-norm converging to $x$ implies that $(\tau (|\sqrt{x_n}\cdot \sqrt{x}|))_{n \in \N}$ converges to $\tau(x)$. Since $x \in D_{\tau}(A)$, we have $\tau(x) = 1$. Hence, $(\tau (|\sqrt{x_n}\cdot \sqrt{x}|))_{n \in \N}$ converges to $1$.

Consider \[(d_{B}^{\tau}(x_n,x))_{n \in \N} = \left(\sqrt{1 - \tau\left(|\sqrt{x_n}\cdot \sqrt{x}|\right)}\right)_{n \in \N}.\]   Since \[\left(\tau(|\sqrt{x_n}\cdot \sqrt{x}|)\right)_{n \in \N}\]converges to $1$,  \[(d_{B}^{\tau}(x_n,x))_{n \in \N} \] converges to $\sqrt{1-1} = 0$. Hence $(x_n)_{n \in \N}$ converges to $x$ with respect to the Bures metric $d_{B}^{\tau}$.
\end{proof}

We thus have.

\begin{corollary}\label{c:c*-norm-finer}
Let $\A$ be a unital C*-algebra. Let $\tau$ be a faithful tracial state. 

It holds that the topology induced by $d_\A$ is finer than the topology induced by $d_B^\tau$.
\end{corollary}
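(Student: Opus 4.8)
The plan is to deduce Corollary~\ref{c:c*-norm-finer} directly from Theorem~\ref{t:conv-bures-c*} using the standard sequential characterization of when one metric topology refines another. Recall that for two metrics on a common set, the topology induced by $d_\A$ is finer than the topology induced by $d_B^\tau$ precisely when every $d_B^\tau$-open set is $d_\A$-open, and since both metric topologies are first countable, this is equivalent to the statement that every sequence which converges in $d_\A$ also converges (to the same limit) in $d_B^\tau$. Theorem~\ref{t:conv-bures-c*} provides exactly this implication on the set $D_\tau(\A)$: any sequence $(x_n)_{n\in\N}$ in $D_\tau(\A)$ that converges to $x$ in the $C^*$-norm (equivalently, in $d_\A$) also converges to $x$ in $d_B^\tau$.

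First I would restate the goal metrically: the inclusion of topologies we want is the assertion that the identity map $\mathrm{id}\colon (D_\tau(\A), d_\A) \to (D_\tau(\A), d_B^\tau)$ is continuous. Because $(D_\tau(\A), d_\A)$ is a metric space and hence first countable, continuity of this identity map is equivalent to sequential continuity: for every convergent sequence $x_n \xrightarrow{d_\A} x$ we must have $\mathrm{id}(x_n) = x_n \xrightarrow{d_B^\tau} x$. I would then invoke Theorem~\ref{t:conv-bures-c*} to supply this sequential continuity verbatim, noting that $C^*$-norm convergence in $D_\tau(\A)$ is the same as $d_\A$-convergence since $d_\A$ is the metric induced by $\|\cdot\|_\A$.

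Having established that the identity map $(D_\tau(\A), d_\A) \to (D_\tau(\A), d_B^\tau)$ is continuous, the conclusion is immediate: the preimage under a continuous map of every $d_B^\tau$-open set is $d_\A$-open, and since the identity is its own inverse image operation on subsets, every $d_B^\tau$-open subset of $D_\tau(\A)$ is $d_\A$-open. This is precisely the statement that the $d_\A$-topology is finer than the $d_B^\tau$-topology.

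There is no real obstacle here; the corollary is a formal consequence of the theorem, and the only point requiring any care is the passage from the pointwise sequential statement of Theorem~\ref{t:conv-bures-c*} to the statement about topologies. The justification for that passage rests solely on first countability of metric topologies, which is automatic. I would therefore keep the proof to a single short paragraph: invoke first countability to reduce refinement to sequential convergence, and then cite Theorem~\ref{t:conv-bures-c*}.
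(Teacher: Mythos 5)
Your proof is correct and takes the same route as the paper: the paper derives Corollary~\ref{c:c*-norm-finer} as an immediate consequence of Theorem~\ref{t:conv-bures-c*} (stating only ``We thus have''), and your argument simply makes explicit the standard reduction, via first countability of metric topologies, from the sequential statement of the theorem to the comparison of topologies. Nothing is missing; you have just written out the details the paper leaves implicit.
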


 This raises the question of whether these topologies are the same in general. The answer is no, and the rest of this section is devoted to providing an example of when these topologies disagree.

 Consider, $C([0,1])$, the C*-algebra of continuous complex-valued functions  on $[0,1]$.  
The map
\[
\rho: f \in C([0,1]) \longmapsto \int_0^1 f(x) \ dx,
\]
where $\int_0^1 \cdot  \ dx$ is the standard Lebesgue integral,
is a faithful trace on $C([0,1])$.

\begin{proposition}\label{p:strictly-finer}
The topology induced by the C*-norm induced metric, $d_{C([0,1])}$, on the density space $D_\rho(C([0,1]))$ is strictly finer than the topology induced by the Bures metric $d_B^\rho.$
\end{proposition}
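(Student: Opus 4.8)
The plan is to lean on Corollary \ref{c:c*-norm-finer}, which already establishes that the $d_{C([0,1])}$-topology is finer than the $d_B^\rho$-topology. To upgrade this to \emph{strictly} finer it suffices to exhibit a single sequence $(x_n)_{n\in\N}$ in $D_\rho(C([0,1]))$ together with a point $x\in D_\rho(C([0,1]))$ such that $x_n\to x$ in $d_B^\rho$ while $\|x_n-x\|_{[0,1]}\not\to 0$. Indeed, if the two topologies coincided, then $d_B^\rho$-convergence to $x$ would force $d_{C([0,1])}$-convergence to $x$, contradicting the failure of C*-norm convergence; hence the topologies differ, and combined with Corollary \ref{c:c*-norm-finer} the containment is strict.

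The first simplification I would record is that, since $C([0,1])$ is commutative and $\rho$ is the Lebesgue integral, the Bures metric takes the elementary Hellinger-type form
\[
d_B^\rho(x,y)=\sqrt{1-\int_0^1\sqrt{x(t)\,y(t)}\dif t}
\]
for $x,y\in D_\rho(C([0,1]))$: for positive functions $\sqrt{x}\sqrt{y}$ is the nonnegative pointwise product $\sqrt{xy}$, so $|\sqrt{x}\sqrt{y}|=\sqrt{xy}$ and $\tau=\rho$ is integration. I would take the fixed target to be the constant density $\un$, which lies in $D_\rho(C([0,1]))$ because $\int_0^1 1\dif t=1$, so that the problem reduces to controlling the single quantity $\int_0^1\sqrt{x_n(t)}\dif t$.

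I would then build $x_n$ as a constant baseline carrying almost all the mass, plus a tall, thin triangular spike pinned at the endpoint $0$: for $n\ge 1$ set $c_n:=1-\frac{1}{2n}$, let $x_n$ equal $c_n$ on $[1/n^2,1]$, and let it rise linearly to the value $c_n+n$ at $t=0$ across $[0,1/n^2]$. The triangular excess over the baseline has area $\frac{1}{2}\cdot n\cdot\frac{1}{n^2}=\frac{1}{2n}$, so $\int_0^1 x_n\dif t=c_n+\frac{1}{2n}=1$; since $x_n$ is moreover continuous and positive (as $c_n\ge\frac12>0$), we get $x_n\in D_\rho(C([0,1]))$. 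This spike immediately defeats C*-norm convergence, since $\|x_n-\un\|_{[0,1]}\ge |x_n(0)-1|=n-\frac{1}{2n}\to\infty$.

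Finally I would verify $d_B^\rho(x_n,\un)\to 0$ by splitting $\int_0^1\sqrt{x_n(t)}\dif t$ across $[0,1/n^2]$ and $[1/n^2,1]$. On the long piece the integrand equals $\sqrt{c_n}$, contributing $\sqrt{c_n}\bigl(1-1/n^2\bigr)\to 1$, while on the short piece the integrand is bounded by $\sqrt{c_n+n}\le\sqrt{n+1}$, contributing at most $\sqrt{n+1}/n^2\to 0$; hence $\int_0^1\sqrt{x_n}\dif t\to 1$ and $d_B^\rho(x_n,\un)\to 0$. The only genuine design decision — and thus the main obstacle — is balancing the spike's height against its width: the mass constraint $\int_0^1 x_n=1$ forces a tall spike to be correspondingly thin, and it is precisely this thinness (a vanishing width against a merely square-root-sized height) that renders the spike invisible to the quantity $\int_0^1\sqrt{x_n}$ controlling the Bures metric while leaving it glaring in the supremum norm. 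Once the spike is chosen to exploit this asymmetry, the remaining estimates are routine.
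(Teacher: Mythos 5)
Your proposal is correct and follows essentially the same route as the paper: invoke Corollary \ref{c:c*-norm-finer} for the containment, then exhibit an explicit sequence in $D_\rho(C([0,1]))$ converging to the constant function $\un$ in $d_B^\rho$ but not uniformly, by concentrating the deviation from $\un$ on a set of vanishing measure so that the integral $\int_0^1\sqrt{x_n}\,dt$ still tends to $1$. The only difference is cosmetic: the paper uses uniformly bounded tent functions whose sup-norm distance from $\un$ stays equal to $1$ at the endpoints, whereas you use an unbounded thin spike, but the mechanism and the estimates are the same.
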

\begin{proof}
    The fact that the topology induced by $d_{C([0,1])}$ is finer is provided by Corollary \ref{c:c*-norm-finer}. Thus, it remains to show that the topologies don't equal.

    To accomplish this, we will find a sequence in $D_\rho(C([0,1]))$ that converges with respect to $d_B^\rho$, but does not converge uniformly (i.e. with respect to $d_{C([0,1])}$).

Let $n \in \N$. 
Consider
\begin{equation}\label{eq:sequence}
f_{n}(x)=
    \begin{cases}
        2nx & \text{if } x \in [0, \frac{1}{2n}]\\
        1 & \text{if } x \in (\frac{1}{2n}, 1-\frac{1}{2n})\\
				2nx-2n+2 & \text{if } x \in [1-\frac{1}{2n}, 1]
    \end{cases}
\end{equation}
defined for all $x \in [0,1]$ (see Figure \ref{f:sequence}). Note that $f_n \in C([0,1]).$

\begin{figure}[h]
    \centering
    \includegraphics[width=8cm]{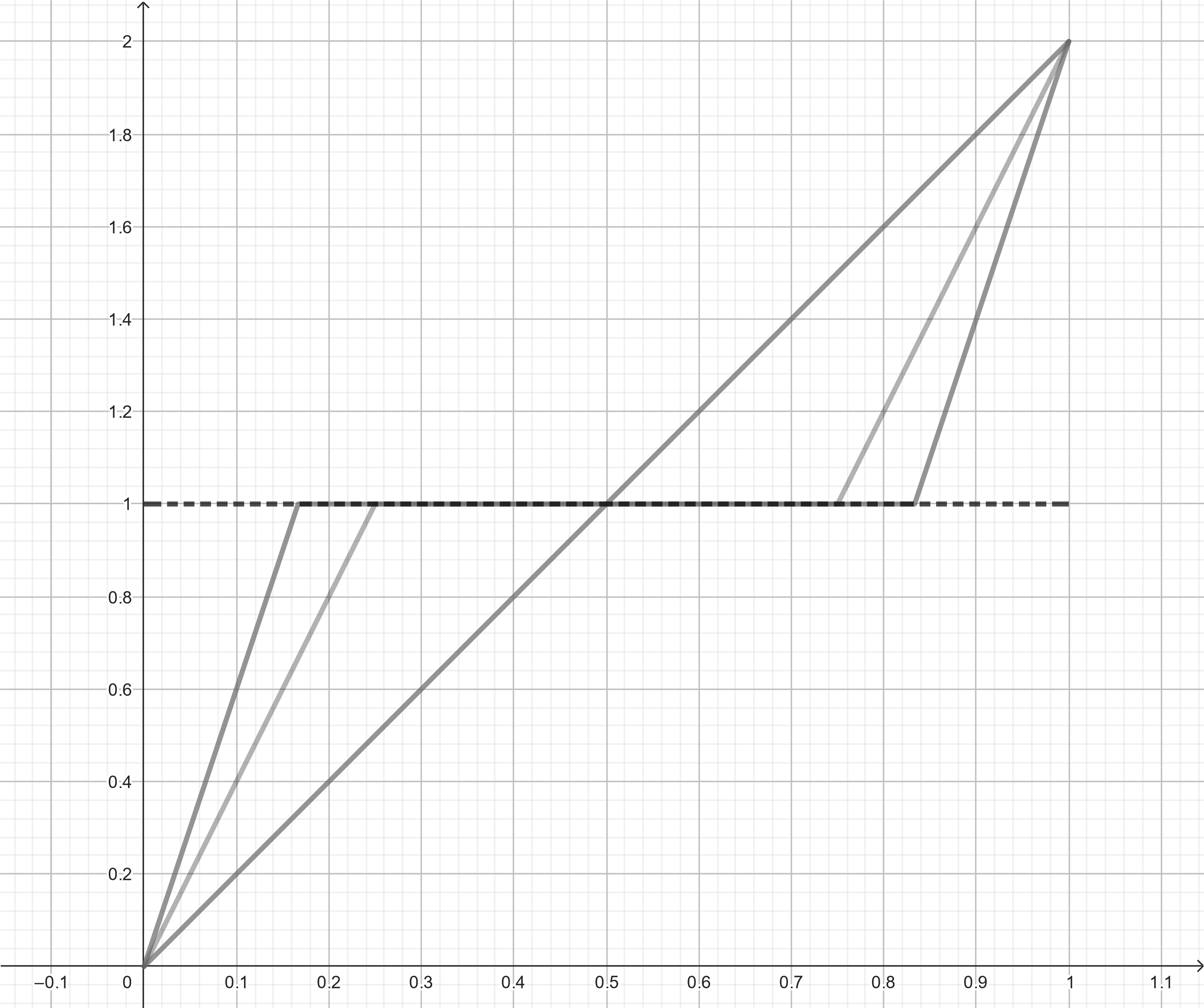}
    \caption{$f_1,f_2,f_3$ of Expression \eqref{eq:sequence}. (Plotted in GeoGebra)}
    \label{f:sequence}
\end{figure}

Define $f:[0,1]\rightarrow \R$ defined by $f(x)=1$ for all $x \in [0,1].$ Note $f \in D_\rho(C([0,1])).$

Next, we check that $f_n\in D_\rho(C([0,1])$ for every $n \in \N$. Let $n\in \N$. First $f_n\geq 0$ by construction. Second, we have \begin{equation*}
\begin{split}
\rho(f_n) &= \int_{0}^{1} f_n(x) \,dx \\
&= \int_{0}^{\frac{1}{2n}} 2nx \,dx + \int_{\frac{1}{2n}}^{1 - \frac{1}{2n}} 1 \,dx + \int_{1 - \frac{1}{2n}}^{1} (2nx-2n+2) \,dx \\
&= [nx^2]_0^{\frac{1}{2n}} + [x]_{\frac{1}{2n}}^{1 - \frac{1}{2n}} + [nx^2-2nx+2x]_{1 - \frac{1}{2n}}^{1} \\
&= \frac{1}{4n} + 1 - \frac{1}{n} + \frac{3}{4n} \\
&= 1
\end{split}
\end{equation*}
Hence $f_n \in D_\rho(C([0,1])).$

We will now prove that the sequence of functions $(f_n)_{n\in \N}$ converges to the function $f $ in the Bures metric, $d_B^\rho$, but fails to converge to  $f$ uniformly.

Let $n \in \N.$ Then
    \begin{equation*}
\begin{split}
\rho(\sqrt{f_n}\sqrt{f}) &= \int_{0}^{1} (\sqrt{f_n}\sqrt{f}) \,dx \\
&= \int_{0}^{1} (\sqrt{f_n}) \,dx \\
&= \int_{0}^{\frac{1}{2n}} (\sqrt{2nx}\cdot 1) \,dx + \int_{\frac{1}{2n}}^{1 - \frac{1}{2n}} (\sqrt{1}\cdot 1) \,dx \\
& \quad \quad + \int_{1 - \frac{1}{2n}}^{1} (\sqrt{2nx-2n+2}\cdot 1) \,dx \\
&= \int_{0}^{\frac{1}{2n}} \sqrt{2nx} \,dx + \int_{\frac{1}{2n}}^{1 - \frac{1}{2n}} 1 \,dx + \int_{1 - \frac{1}{2n}}^{1} \sqrt{2nx-2n+2} \,dx \\
&= \sqrt{2n} \cdot [\frac{2}{3} x^{\frac{3}{2}}]_0^{\frac{1}{2n}} + [x]_{\frac{1}{2n}}^{1 - \frac{1}{2n}} + \sqrt{2} \cdot [\frac{2}{3n} \cdot (nx-n+1)^{\frac{3}{2}}]_{1 - \frac{1}{2n}}^{1} \\
&= \frac{1}{3n} + 1 - \frac{1}{n} + \frac{2\sqrt{2}}{3n} - \frac{1}{3n} \\
&= 1 - \frac{3-2\sqrt{2}}{3n}.
\end{split}
\end{equation*}

Therefore, \[d_{B}^{\rho}(f_n,f) = \sqrt{1 - \rho(\sqrt{f_n}\sqrt{f})} = \sqrt{1 - \left(1 - \frac{3-2\sqrt{2}}{3n}\right)} = \sqrt{\frac{3-2\sqrt{2}}{3n}}.\]

Thus, \[\lim_{n \to \infty}d_{B}^{\rho}(f_n,f) =0.\]
Therefore, we have shown that $(f_n)_{n \in \N}$ converges to $f $ in the $\rho$-Bures metric.

Finally, suppose by way of contradiction that  $(f_n)_{n\in \N}$ converges uniformly to $f $. Then by definition, for any $\delta > 0$, there exists $M \in \N$ such that for any $n > M$ and any $x \in [0,1]$, we have $|f_n(x) - f| < \delta$. Therefore, for some $\delta_0 \in (0,1)$, there exists $M_0 \in \N$ such that for any $n > M_0$ and any $x \in [0,1]$, we have $|f_n(x) - f| < \delta_0$. However, let $x = 1$, then $|f_n(1) - f(1)| = |2 - 1| = 1 > \delta_0$, a contradiction. 
Hence, our assumption is false.
\end{proof}

\section{Topological Equivalence of Bures, C*-, and quantum metrics for finite-dimensional C*-algebras}\label{s:top-c*-bures-quantum}

Although we just saw an example of when the C*-metric topology and Bures metric topology do not agree on the density space, in this section, we will see that if the C*-algebra is assumed to be finite dimensional, then we will show that these topologies agree. Moreover, we will also show that in the finite-dimensional case, we will get that the Bures metric agrees with a metric on the density space induced by the quantum metric, which thus brings these two important metrics together.  We first show how one can induce a metric on the density space using a quantum metric in a natural way.

Let $\A$ be a unital C*-algebra. Let $\tau$ be a faithful trace  on $\A$. For each $a \in D_\tau(\A)$, define
\[
\varphi_a^\tau (b)= \tau(ab)
\]
for every $b \in \A$.

The next result might be well-known, but we provide a proof here for convenience.
\begin{proposition}\label{p:density-state-map}
Let $\A$ be a unital C*-algebra. Let $\tau$ be a faithful trace   on $\A$. 
The map 
\[
\Phi_\tau : a \in D_\tau(\A) \longmapsto \varphi_a^\tau \in S(\A)
\]
is well-defined and injective. 
\end{proposition}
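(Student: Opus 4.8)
The plan is to verify the two assertions separately. For well-definedness I must show that for each $a \in D_\tau(\A)$ the functional $\varphi_a^\tau$ is genuinely a state, i.e.\ a positive linear functional of norm one. Linearity is immediate from the linearity of $\tau$, and the normalization is equally quick: since $a \in D_\tau(\A)$ we have $\varphi_a^\tau(1_\A) = \tau(a \cdot 1_\A) = \tau(a) = 1$. The crux of well-definedness is therefore positivity, and this is where the trace property must be used.

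To establish positivity, I would take an arbitrary $b \in \A_+$ and rewrite $\tau(ab)$ using the factorization $a = \sqrt{a}\,\sqrt{a}$ (valid since $a \geq 0$) together with the tracial identity $\tau(xy) = \tau(yx)$, obtaining
\[
\varphi_a^\tau(b) = \tau\!\left(\sqrt{a}\,\sqrt{a}\,b\right) = \tau\!\left(\sqrt{a}\,b\,\sqrt{a}\right).
\]
Writing $b = \sqrt{b}\,\sqrt{b}$ with $\sqrt{b}$ self-adjoint, one sees that $\sqrt{a}\,b\,\sqrt{a} = \left(\sqrt{b}\,\sqrt{a}\right)^*\left(\sqrt{b}\,\sqrt{a}\right)$ is a positive element, so that $\varphi_a^\tau(b) \geq 0$ because a faithful trace is in particular a positive functional. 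Since a positive linear functional on a unital C*-algebra attains its norm at the unit, $\|\varphi_a^\tau\| = \varphi_a^\tau(1_\A) = 1$; hence $\varphi_a^\tau \in S(\A)$ and $\Phi_\tau$ is well-defined.

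For injectivity, suppose $\varphi_a^\tau = \varphi_b^\tau$ with $a, b \in D_\tau(\A)$. Then $\tau\!\left((a-b)c\right) = 0$ for every $c \in \A$. Setting $d = a - b$, which is self-adjoint because $a$ and $b$ are, I would plug in the specific choice $c = d$ to obtain $\tau(d^* d) = \tau(d^2) = 0$. Faithfulness of $\tau$ then forces $d^* d = 0$, and since $\|d\|_\A^2 = \|d^* d\|_\A = 0$ by the C*-identity, this gives $d = 0$, i.e.\ $a = b$.

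I expect the positivity step to be the main (though modest) obstacle, as it is the only place requiring a genuine manipulation, namely the rearrangement of $\tau(ab)$ into the trace of a manifestly positive element via the trace property. Everything else follows from standard facts about positive functionals together with the faithfulness hypothesis, which is exactly what makes injectivity work.
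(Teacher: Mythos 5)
Your proposal is correct and follows essentially the same route as the paper: positivity is obtained by using the tracial property to rewrite $\tau(ab)$ as the trace of a manifestly positive element (the paper writes $a=b^*b$ and cyclically permutes $\tau(b^*bc^*c)=\tau((cb^*)(cb^*)^*)$, while you use $a=\sqrt{a}\sqrt{a}$ and $\tau(\sqrt{a}\,b\,\sqrt{a})$, which is the same idea), the norm-one claim follows from the standard fact that a positive functional attains its norm at the unit, and injectivity is proved by testing against $a-b$ itself and invoking faithfulness, exactly as in the paper.
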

\begin{proof}
   Let $a\in \A_+$ and so there exists $b\in \A$ such that $a=b^*b$ by \cite[Lemma I.4.3]{Davidson}. Let $c\in \A$. Then since $\tau$ is a trace,
   \[
   \varphi_a^\tau(c^*c)= \tau(b^*bc^*c)= \tau(cb^*bc^*)= \tau(cb^*(cb^*)^*)\geq 0.
   \]
   Thus $\varphi$ is a positive linear functional, and in particular, we have that $\varphi_a^\tau(1_\A)=\|\varphi_a^\tau\|_\mathrm{op}$ by \cite[Lemma I.9.5]{Davidson}.  Hence as $a \in D_\tau(\A)$
   \[
   \|\varphi_a^\tau\|_\mathrm{op}=\tau(a1_\A)=\tau(a)=1.
   \]
 Thus $\varphi_a^\tau$ is a state, and therefore, $\Phi_\tau$ is well-defined.

 Next, let $a,a'\in \A_+$ such that $\Phi_\tau(a)=\Phi_\tau(a')$. Hence $\varphi_a^\tau((a-a')^*)=\varphi^\tau_{a'}((a-a')^*)$. Thus $\tau(a(a-a')^*)=\tau(a'(a-a')^*)$, and so $\tau((a-a')(a-a')^*)=0$. Therefore, $a-a'=0$ as $\tau$ is faithful. Hence $a=a'.$  Thus $\Phi_\tau$ is injective.
\end{proof}
This allows us to define a new metric on $D_\tau(\A).$ Indeed

\begin{definition}\label{d:quantum-density-metric}
    Let $(\A, L)$ be a compact quantum metric space. For every $x,y \in D_\tau(\A)$, define
    \[
    d_L^\tau(x,y)=mk_L(\Phi_\tau(x),\Phi_\tau(y))=mk_L(\varphi_x,\varphi_y),
    \]
    which defines a metric on $D_\tau(\A)$ since $\Phi_\tau$ is well-defined and injective. We will still call $d_L^\tau$ a quantum metric.
\end{definition}

For the remainder of the section $\A$ will be a finite-dimensional C*-algebra.
We will now prove that the C*-metric topology and quantum metric topology are the same in this case. But first, we establish that $(D_\tau(\A), d_\A)$ is a compact metric space.

\begin{proposition}\label{p:c*-metric-compact-fd}
   If $\A$ is a finite-dimensional C*-algebra, then  $(D_\tau(\A), d_\A)$ is a compact metric space.
\end{proposition}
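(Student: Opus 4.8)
The plan is to invoke the Heine--Borel theorem. Since $\A$ is finite-dimensional, it is linearly homeomorphic to some $\C^k$, so a subset of $\left(\A, \|\cdot\|_\A\right)$ is compact if and only if it is closed and bounded. It therefore suffices to show that $D_\tau(\A)$ is closed and bounded in the $C^*$-norm; compactness of $\left(D_\tau(\A), d_\A\right)$ follows at once, as $d_\A$ is by definition the metric induced by $\|\cdot\|_\A$.

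For closedness, I would argue that $D_\tau(\A) = \A_+ \cap \tau^{-1}(\{1\})$ is an intersection of two closed sets. The positive cone $\A_+$ is closed in any $C^*$-algebra. Moreover, because $\A$ is finite-dimensional, the linear functional $\tau$ is automatically $C^*$-norm continuous, so the affine hyperplane $\tau^{-1}(\{1\})$, being the preimage of the closed set $\{1\}$, is closed.

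For boundedness, which is where faithfulness of $\tau$ is essential, I would consider the set $K = \{a \in \A_+ : \|a\|_\A = 1\}$. This set is closed and bounded, hence compact by Heine--Borel. The map $a \mapsto \tau(a)$ is continuous, and since $\tau$ is faithful and every element of $K$ is a nonzero positive element, it is strictly positive on $K$; a continuous strictly positive function on a nonempty compact set attains a minimum $m > 0$. For any nonzero $a \in \A_+$, the element $a / \|a\|_\A$ lies in $K$, so by linearity
\[
\tau(a) = \|a\|_\A \, \tau\!\left(\frac{a}{\|a\|_\A}\right) \geq m \, \|a\|_\A,
\]
whence $\|a\|_\A \leq \tau(a) / m$. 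In particular every $a \in D_\tau(\A)$ satisfies $\|a\|_\A \leq 1/m$, so $D_\tau(\A)$ is bounded.

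The step I expect to be the main obstacle is the boundedness argument: a priori, positive elements of arbitrarily large norm could have trace value $1$, and ruling this out is exactly what faithfulness provides. Without faithfulness the infimum of $\tau$ over $K$ could vanish, and then $D_\tau(\A)$ could fail to be bounded. Once closedness and boundedness are both in hand, Heine--Borel closes the argument and yields that $\left(D_\tau(\A), d_\A\right)$ is a compact metric space.
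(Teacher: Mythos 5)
Your proof is correct, but your boundedness argument takes a genuinely different route from the paper's. The paper also reduces to Heine--Borel via closed-and-bounded, and its closedness step is the same as yours (though terser: it only cites continuity of $\tau$, leaving closedness of $\A_+$ implicit, which you spell out). For boundedness, however, the paper invokes the equivalence of the C*-norm $\|\cdot\|_\A$ with the trace norm $\|a\|_\tau = \sqrt{\tau(a^*a)}$ --- a genuine norm precisely because $\tau$ is faithful --- and then applies the C*-identity to the square root: for $a \in D_\tau(\A)$, $\sqrt{\|a\|_\A} = \|\sqrt{a}\|_\A \leq \alpha\|\sqrt{a}\|_\tau = \alpha\sqrt{\tau(a)} = \alpha$, so $\|a\|_\A \leq \alpha^2$. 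You instead minimize $\tau$ over the compact set $K = \{a \in \A_+ : \|a\|_\A = 1\}$ and use faithfulness to get a strictly positive minimum $m$, yielding the linear lower bound $\tau(a) \geq m\|a\|_\A$ on $\A_+$. The two arguments are close cousins: both ultimately produce a bound of the form $\|a\|_\A \leq C\,\tau(a)$ for positive $a$, and the norm-equivalence theorem the paper cites is itself proved by a compactness argument of exactly your type. What the paper's version buys is brevity, by outsourcing the compactness to a standard fact and exploiting the C*-identity; what yours buys is self-containedness and transparency about where faithfulness enters (strict positivity of $\tau$ on the nonzero positive cone), rather than having it hidden inside the claim that $\|\cdot\|_\tau$ is a norm.
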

\begin{proof}
    We only need to show that $D_\tau(\A)$ is closed and bounded with respect to the C*-norm. First, it is closed since $\tau$ is continuous with respect to the C*-norm.

Since $\A$ is finite dimensional, we have that the norms $\|\cdot\|_\A$ and $\|\cdot\|_\tau$ are equivalent, where $\|a\|_\tau=\sqrt{\tau(a^*a)}$ for every $ a\in \A$.  Now, let $a \in D_\tau(\A)$ and let $\sqrt{a}$ denote its unique positive square root.  We have by the C*-identity
    \begin{align*}
    \sqrt{\|a\|_\A}& =  \sqrt{\|(\sqrt{a})^*\sqrt{a}\|_\A}\\
    &=\|\sqrt{a}\|_\A \\
    &\leq \alpha \|\sqrt{a}\|_\tau\\
    &=\alpha\sqrt{\tau((\sqrt{a})^*\sqrt{a})}=\alpha\sqrt{\tau(a)}=\alpha
    \end{align*}
    since $\tau(a)=1$. 
    And so $\|a\|_\A\leq \alpha^2$. Hence $D_\tau(\A)$ is bounded, and thus compact by the Heine-Borel theorem.
\end{proof}

\begin{theorem}\label{t:quantum-c*-equiv}
  Let $(\A, L)$ be a compact quantum metric space and let $\tau$ be a faithful trace. If $\A$ is   finite dimensional, then $d_L^\tau$ and $d_\A$ are topologically equivalent.
\end{theorem}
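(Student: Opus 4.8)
The plan is to prove topological equivalence by showing that the identity map
\[
\mathrm{id} : (D_\tau(\A), d_\A) \longrightarrow (D_\tau(\A), d_L^\tau)
\]
is a homeomorphism. Since both topologies come from metrics, it would suffice to check continuity of this map and of its inverse by hand; but rather than grind out estimates in both directions, I would prove continuity only in the direction that follows painlessly from norm convergence and then let compactness supply the reverse direction for free.

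First I would show that $d_\A$-convergence implies $d_L^\tau$-convergence, i.e.\ that $\mathrm{id}$ is continuous. Let $(x_n)_{n \in \N}$ be a sequence in $D_\tau(\A)$ converging in the C*-norm to $x \in D_\tau(\A)$. For each fixed $b \in \A$, continuity of multiplication and of $\tau$ with respect to the C*-norm gives
\[
\varphi_{x_n}^\tau(b) = \tau(x_n b) \longrightarrow \tau(x b) = \varphi_x^\tau(b),
\]
so the states $\Phi_\tau(x_n)$ converge to $\Phi_\tau(x)$ in the weak* topology on $S(\A)$. Because $L$ is an $L$-seminorm, $mk_L$ metrizes the weak* topology by Definition \ref{d:quantum-metric}(4); hence weak* convergence is exactly $mk_L$-convergence, and
\[
d_L^\tau(x_n, x) = mk_L\!\left(\Phi_\tau(x_n), \Phi_\tau(x)\right) \longrightarrow 0.
\]
As both $(D_\tau(\A), d_\A)$ and $(D_\tau(\A), d_L^\tau)$ are metric spaces, sequential continuity suffices, so $\mathrm{id}$ is continuous; equivalently, the $d_\A$-topology is finer than the $d_L^\tau$-topology.

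Next I would invoke the standard fact that a continuous bijection from a compact space onto a Hausdorff space is automatically a homeomorphism. Here $\mathrm{id}$ is a continuous bijection, its domain $(D_\tau(\A), d_\A)$ is compact by Proposition \ref{p:c*-metric-compact-fd} (this is the one place finite-dimensionality is used), and its codomain $(D_\tau(\A), d_L^\tau)$ is Hausdorff as a metric space. Therefore $\mathrm{id}$ is a homeomorphism and the two topologies coincide.

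The only genuinely delicate point is the continuity step, and in particular the correct use of the defining property that $mk_L$ metrizes the weak* topology: I must translate C*-norm convergence of the densities into pointwise (weak*) convergence of the associated states and only then read off $mk_L$-convergence. The reverse implication—that $d_L^\tau$-convergence forces C*-norm convergence—would be the hard direction if attacked head-on, since $mk_L$ is built from a supremum over the Lipschitz ball $\{a : L(a) \leq 1\}$ rather than from the norm, and no obvious estimate relates the two. The value of the compactness argument is precisely that it yields this direction with no estimate at all.
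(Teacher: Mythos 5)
Your proposal is correct and follows essentially the same route as the paper: both establish that C*-norm convergence of densities gives weak* convergence of the associated states via continuity of multiplication and $\tau$, both invoke Proposition \ref{p:c*-metric-compact-fd} together with the fact that a continuous bijection from a compact space to a Hausdorff space is a homeomorphism, and both finish by using that $mk_L$ metrizes the weak* topology. The only cosmetic difference is that you phrase the compactness argument for the identity map on $D_\tau(\A)$, whereas the paper phrases it for $\Phi_\tau$ viewed as a map onto its image in $S(\A)$; since $d_L^\tau$ is the pullback of $mk_L$ along $\Phi_\tau$, these are the same argument.
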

\begin{proof}
    Let $(a_n)_{n \in \N}$ be a sequence in $D_\tau(\A)$ that converges to $a\in D_\tau(\A)$ with respect to $d_\A$. Then, for each $b \in \A$, we have that 
\[
\lim_{n\to \infty} \varphi^
\tau_{a_n}(b)=\lim_{n\to \infty} \tau(a_nb)=\tau(ab)=\varphi_a^\tau(b)
\]
since multiplication is continuous and $\tau$ is continuous. 
Thus, $\Phi_\tau$ is continuous and   since $(D_\tau(\A), d_\A)$ is compact by Proposition \ref{p:c*-metric-compact-fd}, we have that $\Phi_\tau$ is a homeomorphism onto its image with respect to the weak* topology on $S(\A)$.

Since convergence in $mk_L$
 is equivalent to weak* convergence by definition, we have that $  d_\A $ and $ d^\tau_L $ are  topologically equivalent. 
\end{proof}

Now, we establish that topological equivalence of the C*-metric and Bures metric.

\begin{theorem}\label{t:bures-c*-equiv}
    Let $\A$ be a unital C*-algebra and let $\tau$ be a faithful trace. If $\A$ is a finite dimensional, then the C*-metric $d_\A$ and the Bures metric $d_B^\tau$ are topologically equivalent.
\end{theorem}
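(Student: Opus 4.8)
The plan is to realize the desired equivalence as an instance of the classical fact that a continuous bijection from a compact space onto a Hausdorff space is automatically a homeomorphism. Concretely, I would consider the identity map
\[
\iota \colon (D_\tau(\A), d_\A) \longrightarrow (D_\tau(\A), d_B^\tau)
\]
and argue that it is a homeomorphism; since $\iota$ is the identity on the underlying set, this is exactly the assertion that $d_\A$ and $d_B^\tau$ induce the same topology.

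First I would check continuity of $\iota$. This is precisely the content of Corollary \ref{c:c*-norm-finer} (equivalently Theorem \ref{t:conv-bures-c*}): any $d_\A$-convergent sequence in $D_\tau(\A)$ is also $d_B^\tau$-convergent to the same limit, so $\iota$ sends convergent sequences to convergent sequences and is therefore continuous. Although Theorem \ref{t:conv-bures-c*} is phrased for a faithful tracial state, its proof uses only that $\tau$ is $C^*$-norm continuous and that $\tau(x)=1$ for $x \in D_\tau(\A)$; in finite dimensions every trace is automatically norm continuous, so the implication applies verbatim to the faithful trace $\tau$ considered here.

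Next I would invoke the two topological inputs. The domain $(D_\tau(\A), d_\A)$ is compact by Proposition \ref{p:c*-metric-compact-fd}, which is where finite-dimensionality is essential (via Heine--Borel). The codomain $(D_\tau(\A), d_B^\tau)$ is Hausdorff, being a metric space. Since a continuous map out of a compact space is closed, and a closed continuous bijection is a homeomorphism, it follows that $\iota$ is a homeomorphism and its inverse is continuous. Equivalently, the $d_B^\tau$-topology is also finer than the $d_\A$-topology, so the two topologies coincide.

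Essentially all the work has been front-loaded into Proposition \ref{p:c*-metric-compact-fd} and Corollary \ref{c:c*-norm-finer}, so I do not expect a serious remaining obstacle; the only point requiring care is the compactness of $(D_\tau(\A), d_\A)$, which is precisely what fails in infinite dimensions, as the $C([0,1])$ example of Proposition \ref{p:strictly-finer} demonstrates. I would therefore emphasize that the finite-dimensional hypothesis enters solely through compactness, and that once compactness is granted, the compact-to-Hausdorff argument upgrades the one-sided comparison of Corollary \ref{c:c*-norm-finer} into a full topological equivalence.
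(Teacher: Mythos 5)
Your proposal is correct and follows essentially the same route as the paper: the identity map $(D_\tau(\A), d_\A) \to (D_\tau(\A), d_B^\tau)$ is continuous by Theorem \ref{t:conv-bures-c*}, the domain is compact by Proposition \ref{p:c*-metric-compact-fd}, and the compact-to-Hausdorff argument upgrades this to a homeomorphism. Your added observation that Theorem \ref{t:conv-bures-c*} is stated for tracial states but applies to any norm-continuous faithful trace is a careful (and correct) clarification of a hypothesis mismatch the paper glosses over, but it does not change the argument.
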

\begin{proof}
    Consider
 \[
 id_{D_\tau(\A)}: (D_\tau(\A), d_\A) \longrightarrow  (D_\tau(\A), d_B^\tau)
 \]
By Theorem \ref{t:conv-bures-c*}, we have that convergence in $d_\A$ implies convergence in $d^\tau_B$. Hence $ id_{D_\tau(\A)}$ is continuous, and since $(D_\tau(\A), d_\A)$ is compact, we have that $ id_{D_\tau(\A)}$ is a homeomorphism. Hence $d_\A$ and $d_B^\tau$ are topologically equivalent.
\end{proof}
We conclude this section with topological equivalence of the Bures metric and quantum metric.

\begin{corollary}\label{c:bures-quantum-equiv}
     Let $(\A,L)$ be a compact quantum metric space and let $\tau$ be a faithful trace. If $\A$ is finite dimensional, then the Bures metric $d_B^\tau$ and quantum metric $d_L^\tau$ are topologically equivalent.
\end{corollary}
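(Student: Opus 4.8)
The plan is to deduce this corollary purely by transitivity from the two topological equivalences already established in the finite-dimensional setting. First I would recall that topological equivalence of metrics is nothing more than the assertion that the two metrics induce the same topology on the underlying set, and that equality of topologies on a fixed set $D_\tau(\A)$ is manifestly an equivalence relation; in particular it is transitive. This is the only conceptual ingredient needed.

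With that observation in hand, the argument is immediate. Theorem \ref{t:bures-c*-equiv} gives, under the finite-dimensionality hypothesis, that the C*-metric $d_\A$ and the Bures metric $d_B^\tau$ induce the same topology on $D_\tau(\A)$. Theorem \ref{t:quantum-c*-equiv} gives, under the same hypothesis, that $d_\A$ and the quantum metric $d_L^\tau$ induce the same topology on $D_\tau(\A)$. Chaining these two equalities of topologies through the common metric $d_\A$ yields that $d_B^\tau$ and $d_L^\tau$ induce the same topology, which is exactly the claim. Concretely, the composite identity maps
\[
(D_\tau(\A), d_B^\tau) \longrightarrow (D_\tau(\A), d_\A) \longrightarrow (D_\tau(\A), d_L^\tau)
\]
and its reverse are both homeomorphisms, being compositions of the homeomorphisms supplied by the two cited results.

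There is essentially no obstacle here: the only content is the bookkeeping observation that $d_\A$ serves as a bridge between the other two metrics, and both required bridges have already been built in Theorems \ref{t:quantum-c*-equiv} and \ref{t:bures-c*-equiv}. I would not expect to invoke any explicit structure of $\A$, $\tau$, or $L$ beyond those two theorems, since the finite-dimensionality hypothesis has already been consumed in establishing each of them.
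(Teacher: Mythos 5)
Your proposal is correct and follows exactly the paper's own argument: the paper also deduces the corollary by transitivity, combining Theorems \ref{t:bures-c*-equiv} and \ref{t:quantum-c*-equiv} with the fact that topological equivalence (being homeomorphic) is an equivalence relation, using $d_\A$ as the bridge.
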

\begin{proof}
    This follows immediately from Theorems \ref{t:bures-c*-equiv} and \ref{t:quantum-c*-equiv} and since homeomorphic is an equivalence relation.
\end{proof}

\section{Metric inequivalence for $\C^2$}\label{s:non-metric}

In this last section, we see that although the Bures metric and quantum metric are topologically  equivalent for finite-dimensional C*-algebras, there exists finite-dimensional C*-algebras for which they aren't equivalent as metric spaces. Our approach also proves that the Bures metric and C*-norm are not metrically equivalent since we find a quantum metric that in fact agrees with the metric on the density space induced by the C*-norm.

Consider the unital C*-algebra $\C^2$. Define for every $x=(x_1,x_2)\in \C^2$
\[
L^B(x)=|x_1-x_2|.
\]
It holds that $L^B$ is an L-seminorm since $\C^2$ is finite dimensional and by \cite[Proposition 1.6]{Rieffel98a}. And so $mk_{L^B}$ is a quantum metric 

Consider the following faithful trace $\tau$ on $\C^2$ defined for every $x \in \C^2$ by 
\[
\tau(x)=x_1+x_2.
\]
Now, the Bures metric is given explicitly by the trace, but as the quantum metric is defined by way of a supremum, we first find a formula to explicitly calculate the quantum metric in this case, which in fact recovers the C*-metric.
\begin{theorem}\label{t:explicit-quantum-metric}
    With the setting as above, we have that for every $x,y \in D_\tau\left(\C^2\right)$
    \[
    d_{L^B}^\tau(x,y)=|x_1-y_1|=\|x-y\|_{\C^2}=d_\A(x,y).
    \]
\end{theorem}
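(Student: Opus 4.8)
The plan is to compute everything explicitly, exploiting the fact that the trace constraint collapses each density in $D_\tau(\C^2)$ to a single real parameter, and then to evaluate the Monge--Kantorovich supremum by hand. The key observation that makes this work is that, in two dimensions and after normalization, the difference of the two associated states is an \emph{exact} scalar multiple of $L^B$ evaluated on the test element, so the supremum defining $mk_{L^B}$ does not merely get bounded by $|x_1-y_1|$ but equals it.

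First I would dispose of the elementary equalities on the right. If $x,y \in D_\tau(\C^2)$, then $x_1+x_2 = y_1+y_2 = 1$, so $x_2 = 1-x_1$ and $y_2 = 1-y_1$; these coordinates are nonnegative reals since $x,y$ are positive. Hence $x-y = (x_1-y_1,\,-(x_1-y_1))$, and because the C*-norm on $\C^2$ is the maximum of the moduli of the coordinates,
\[
\|x-y\|_{\C^2} = \max\{|x_1-y_1|,\,|x_1-y_1|\} = |x_1-y_1| = d_\A(x,y).
\]

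Next I would compute the states and their difference. For $b=(b_1,b_2)\in\C^2$ one has $\varphi_x^\tau(b) = \tau(xb) = x_1b_1 + x_2b_2$, and likewise for $y$. Subtracting and again using $x_2-y_2 = -(x_1-y_1)$ gives
\[
\varphi_x^\tau(b) - \varphi_y^\tau(b) = (x_1-y_1)b_1 + (x_2-y_2)b_2 = (x_1-y_1)(b_1-b_2),
\]
so that $|\varphi_x^\tau(b)-\varphi_y^\tau(b)| = |x_1-y_1|\,|b_1-b_2| = |x_1-y_1|\,L^B(b)$ for every $b$, using that $x_1-y_1$ is real.

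Finally, by Definition \ref{d:quantum-density-metric} and the definition of $mk_{L^B}$,
\[
d_{L^B}^\tau(x,y) = \sup\bigl\{\,|x_1-y_1|\,L^B(b) : b\in\C^2,\ L^B(b)\le 1\,\bigr\} = |x_1-y_1|,
\]
where the supremum is attained at any $b$ with $L^B(b)=1$, for instance $b=(1,0)$. Chaining this with the first display yields the asserted equalities. I do not anticipate a genuine obstacle here; the only point requiring care is verifying that the normalization $\tau(x)=\tau(y)=1$ is exactly what forces the difference of states to be a scalar multiple of $L^B(b)$, which is the reason the supremum evaluates to $|x_1-y_1|$ rather than being merely bounded above by it.
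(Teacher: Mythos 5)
Your proposal is correct and follows essentially the same route as the paper's proof: use the trace constraint to get $x_2-y_2=-(x_1-y_1)$, factor the difference of states as $(x_1-y_1)(b_1-b_2)$, and evaluate the supremum (attained at $b=(1,0)$), with the same coordinate computation for the C*-norm equality. The only cosmetic difference is that you state the exact identity $|\varphi_x^\tau(b)-\varphi_y^\tau(b)|=|x_1-y_1|\,L^B(b)$ for all $b$ up front, whereas the paper presents it as an upper bound followed by an attaining element; the content is identical.
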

\begin{proof}
    First, let $x  \in D_\tau\left(\C^2\right)$ and let $x'\in \C^2$.  Then
    \[
    \Phi_\tau(x)(x')=\phi^\tau_x(x')=\tau(xx')=x_1x'_1+x_2x_2'.
    \]
    Now, assume that $y \in D_\tau\left(\C^2\right)$. Then as $\tau(x)=1=\tau(y)$, we have that $x_1+x_2=1=y_1+y_2$, and so $x_1-y_1=y_2-x_2$.

    Hence, if $L^B(x')\leq 1$, we have
\begin{align*}
    |\varphi^\tau_x(x')-\varphi^\tau_y(x')|& =|x_1x_1'+x_2x_2'-y_1x_1'-y_2x_2'|\\
    & = |(x_1-y_1)x_1'+(x_2-y_2)x_2'|\\
    &=   |(x_1-y_1)x_1'-(y_2-x_2)x_2'|\\
    & =|(x_1-y_1)x_1'-(x_1-y_1)x_2'|\\
    & = |(x_1-y_1)(x_1'-x_2')|\\
    & =|x_1-y_1|\cdot |x_1'-x_2'|\\
    & \leq |x_1-y_1|.
\end{align*}
Now, consider $x'=(1,0)$. Then
\[
|\varphi^\tau_x(x')-\varphi^\tau_y(x')|=|x_1-y_1|.
\]
Thus
\[
d_{L^B}^\tau(x,y)=\sup \{ |\varphi^\tau_x(x')-\varphi^\tau_y(x')| : L^B(x')\leq 1\}=|x_1-y_1|.
\]
We also note that 
\[
\|x-y\|_{\C^2}=\max\{ |x_1-y_1|, |x_2-y_2|\}=\max\{|x_1-y_1|, |x_1-y_1|\}=|x_1-y_1|
\]
 as desired.
\end{proof}

\begin{theorem}\label{t:metric-inequiv}
   Let $d=d_{L^B}^\tau$, the Bures metric, or $d=d_\A$. With the setting as above, it holds that the quantum metric $d_{L^B}^\tau$ and $d$ are topologically equivalent but not metric equivalent.
\end{theorem}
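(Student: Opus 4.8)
The plan is to treat the two assertions separately, since almost all the work lies in the second. For topological equivalence there is nothing new to do: $\C^2$ is finite dimensional, so Corollary~\ref{c:bures-quantum-equiv} already gives that the Bures metric $d_B^\tau$ and the quantum metric $d_{L^B}^\tau$ are topologically equivalent, and Theorem~\ref{t:explicit-quantum-metric} identifies $d_{L^B}^\tau$ with $d_\A$, so through Theorem~\ref{t:bures-c*-equiv} all three metrics induce the same topology on $D_\tau(\C^2)$. The real content is to show that $d_B^\tau$ and $d_{L^B}^\tau$ (equivalently $d_\A$) are not metric equivalent.

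For that, the strategy is to exhibit a family of pairs along which the ratio of the two metrics degenerates. First I would describe the density space concretely: every point of $D_\tau(\C^2)$ has the form $(t,1-t)$ with $t\in[0,1]$, and since $\C^2$ is commutative the square root, product, and absolute value in the definition of $d_B^\tau$ are all computed coordinatewise. This gives the closed form
\[
d_B^\tau(x,y)=\sqrt{1-\sqrt{x_1 y_1}-\sqrt{(1-x_1)(1-y_1)}},
\]
while Theorem~\ref{t:explicit-quantum-metric} already supplies $d_{L^B}^\tau(x,y)=|x_1-y_1|$.

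Next I would fix the pure state $x=(1,0)$ (a corner of the simplex) and set $y_n=(1-\tfrac{1}{n},\tfrac{1}{n})$. A direct computation gives $d_{L^B}^\tau(x,y_n)=\tfrac{1}{n}$ and $d_B^\tau(x,y_n)=\sqrt{1-\sqrt{1-\tfrac{1}{n}}}$; rationalizing via $1-\sqrt{1-u}=u/(1+\sqrt{1-u})$ turns this into the exact ratio
\[
\frac{d_{L^B}^\tau(x,y_n)}{d_B^\tau(x,y_n)}=\sqrt{\frac{1+\sqrt{1-\tfrac{1}{n}}}{n}},
\]
which tends to $0$ as $n\to\infty$. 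Since this ratio can be made arbitrarily small, no constant $\alpha>0$ satisfies $\alpha\,d_B^\tau\leq d_{L^B}^\tau$ on all of $D_\tau(\C^2)$, so the lower inequality in the definition of metric equivalence fails and the metrics are not metric equivalent.

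The one genuinely nontrivial choice is locating where to degenerate. The point is that $d_B^\tau$ is a Fisher--Rao/arclength-type distance that stretches out near the boundary of the state simplex, whereas $d_{L^B}^\tau=d_\A$ is Euclidean, so the two decouple exactly as one approaches a pure state: along $y_n\to(1,0)$ the Bures distance scales like $\sqrt{1/n}$ while $d_{L^B}^\tau$ scales like $1/n$. Once the corner $(1,0)$ is identified, the estimate is elementary. I would also remark that the reverse bound $d_{L^B}^\tau\leq\sqrt{2}\,d_B^\tau$ does hold throughout $D_\tau(\C^2)$ (seen most cleanly via the substitution $x_1=\cos^2\theta$, $y_1=\cos^2\phi$, under which $d_B^\tau=\sqrt{2}\,|\sin\tfrac{\theta-\phi}{2}|$), so it is precisely the lower inequality that breaks down, consistent with the surviving topological equivalence.
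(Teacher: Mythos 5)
Your proof is correct and takes essentially the same approach as the paper: both arguments fix the corner $x=(1,0)$ of the density simplex and show that the ratio of the Bures metric to $d_{L^B}^\tau=d_\A$ degenerates as the other point approaches $x$, which kills any two-sided Lipschitz comparison. The only difference is computational---you evaluate the ratio exactly along the sequence $y_n=\left(1-\tfrac{1}{n},\tfrac{1}{n}\right)$ via the rationalization $1-\sqrt{1-u}=u/\left(1+\sqrt{1-u}\right)$, whereas the paper applies L'Hopital's rule to the ratio as $y_1\to 1^-$; your version is slightly more elementary, and your closing remark that the reverse bound $d_{L^B}^\tau\leq\sqrt{2}\,d_B^\tau$ does hold is a correct (if optional) refinement.
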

\begin{proof}
    Corollary \ref{c:bures-quantum-equiv} provides topological equivalence. 

    Consider $x=(1,0)\in D_\tau\left(\C^2\right)$. Let $y=(y_1,y_2)\in D_\tau\left(\C^2\right)$.  Then, by Theorem \ref{t:explicit-quantum-metric} $$d_{L^B}^\tau(x,y)=|1-y_1|.$$
    Also, 
  $$   d_B^{\tau}(x,y)=\sqrt{1-\tau(|\sqrt{x}\sqrt{y}|)}=\sqrt{1-\sqrt{y_1}}.$$
Now, let us consider the ratio \[\frac{d_B^\tau(x,y)}{d_{L^B}^\tau(x,y)}=\frac{\sqrt{1-\sqrt{y_1}}}{|1-y_1|}.\] 
But $\lim_{y_1\to 1^-}\sqrt{1-\sqrt{y_1}}=0$ and $\lim_{y_1\to 1^-}1-y_1=0$. 

Now $\frac{d}{dy_1}(\sqrt{1-\sqrt{y_1}})=\frac{-1}{4\sqrt{y_1}\sqrt{1-\sqrt{y_1}}}$ and $\frac{d}{dy_1}(1-y_1)=-1$, and
\[ \lim_{y_1\to1^-} \frac{\frac{-1}{4\sqrt{y_1}\sqrt{1-\sqrt{y_1}}}}{-1}=\lim_{y_1\to1^-}\frac{1}{4\sqrt{y_1}\sqrt{1-\sqrt{y_1}}}=\infty. \]
Thus by L'Hopital's rule, we have 
\[
\lim_{y_1\to 1^-}\frac{d_B^\tau(x,y)}{d_{L^B}^\tau(x,y)}=\infty.
\]
Hence, the set 
\[
\left\{ \frac{d_B^\tau(x,y)}{d_{L^B}^\tau(x,y)}: x,y \in D_\tau\left(\C^2\right), x \neq y\right\}
\]
is unbounded, and so the metrics $d_B^\tau $ and $ d_{L^B}^\tau$ are not metric equivalent.
\end{proof}

\bibliographystyle{amsplain}
\bibliography{thesis}

\providecommand{\bysame}{\leavevmode\hbox to3em{\hrulefill}\thinspace}
\providecommand{\MR}{\relax\ifhmode\unskip\space\fi MR }
\providecommand{\MRhref}[2]{%
  \href{http://www.ams.org/mathscinet-getitem?mr=#1}{#2}
}
\providecommand{\href}[2]{#2}
\begin{thebibliography}{10}

\bibitem{Aguilar24}
Konrad Aguilar and Jiahui Yu, \emph{The {F}ell topology and the modular
  {G}romov-{H}ausdorff propinquity}, Proc. Amer. Math. Soc. \textbf{152}
  (2024), no.~4, 1711--1724. \MR{4709237}

\bibitem{Bures}
Donald Bures, \emph{An extension of {K}akutani's theorem on infinite product
  measures to the tensor product of semifinite {$w\sp{\ast} $}-algebras},
  Trans. Amer. Math. Soc. \textbf{135} (1969), 199--212. \MR{236719}

\bibitem{Davidson}
Kenneth~R. Davidson, \emph{{$C^*$}-algebras by example}, Fields Institute
  Monographs, vol.~6, American Mathematical Society, Providence, RI, 1996.
  \MR{1402012}

\bibitem{Farenick-Rahaman17}
Douglas Farenick and Mizanur Rahaman, \emph{Bures contractive channels on
  operator algebras}, New York J. Math. \textbf{23} (2017), 1369--1393.
  \MR{3723514}

\bibitem{Latremoliere24b}
Carla Farsi, Fr\'{e}d\'{e}ric Latr\'{e}moli\`ere, and Judith Packer,
  \emph{Convergence of inductive sequences of spectral triples for the spectral
  propinquity}, Adv. Math. \textbf{437} (2024), Paper No. 109442, 59.
  \MR{4674861}

\bibitem{Hayashi}
Masahito Hayashi, \emph{Quantum information theory}, second ed., Graduate Texts
  in Physics, Springer-Verlag, Berlin, 2017, Mathematical foundation.
  \MR{3558531}

\bibitem{Kaad20}
Jens Kaad and David Kyed, \emph{Dynamics of compact quantum metric spaces},
  Ergodic Theory Dynam. Systems \textbf{41} (2021), no.~7, 2069--2109,
  arXiv:1904.13278. \MR{4266364}

\bibitem{Latremoliere-Packer17}
{F} {L}atr\'emoli\`ere and {J} {P}acker, \emph{Noncommutative solenoids and the
  {G}romov-{H}ausdorff propinquity}, Proc. Amer. Math. Soc. \textbf{145}
  (2017), no.~5, 2043--2057. \MR{3611319}

\bibitem{Latremoliere20}
Fr\'{e}d\'{e}ric Latr\'{e}moli\`ere, \emph{The covariant {G}romov-{H}ausdorff
  propinquity}, Studia Math. \textbf{251} (2020), no.~2, 135--169, arXiv:
  1805.11229. \MR{4045657}

\bibitem{Rieffel98a}
M.~A. {R}ieffel, \emph{Metrics on states from actions of compact groups}, Doc.
  Math. \textbf{3} (1998), 215--229. \MR{1647515}

\bibitem{Rieffel00}
\bysame, \emph{Gromov-{H}ausdorff distance for quantum metric spaces}, vol.
  168, 2004, Appendix 1 by Hanfeng Li, Gromov-Hausdorff distance for quantum
  metric spaces. Matrix algebras converge to the sphere for quantum
  Gromov-Hausdorff distance, pp.~1--65. \MR{2055927}

\bibitem{Rieffel01}
\bysame, \emph{Matrix algebras converge to the sphere for quantum
  {G}romov--{H}ausdorff distance}, Mem. Amer. Math. Soc. \textbf{168} (2004),
  no.~796, 67--91, math.OA/0108005.

\end{thebibliography}

 \vfill
\end{document}